    \newtheorem{thm}{Theorem}                     [section]
    \newtheorem{thm*}{Theorem}
    \newtheorem{cor}[thm]{Corollary}
    \newtheorem{lemma*}{Lemma}    
    \newtheorem{rems*}{Remark}   
\newcommand{\ndef}{\newcommand*}
\def\rndef{\renewcommand}
\ndef{\myaddress}[1]{\begin{center} \it\small #1 \end{center}}
\ndef{\clA}{{\mathcal A}} \ndef{\rmA}{{\mathrm A}} \ndef{\mbA}{{\mathbb A}} \ndef{\bfA}{{\mathbf A}} \ndef{\euA}{{\EuScript A}} \ndef{\frA}{{\mathfrak A}}
\ndef{\clB}{{\mathcal B}} \ndef{\rmB}{{\mathrm B}} \ndef{\mbB}{{\mathbb B}} \ndef{\bfB}{{\mathbf B}} \ndef{\euB}{{\EuScript B}} \ndef{\frB}{{\mathfrak B}}
\ndef{\clC}{{\mathcal C}} \ndef{\rmC}{{\mathrm C}} \ndef{\mbC}{{\mathbb C}} \ndef{\bfC}{{\mathbf C}} \ndef{\euC}{{\EuScript C}} \ndef{\frC}{{\mathfrak C}}
\ndef{\clD}{{\mathcal D}} \ndef{\rmD}{{\mathrm D}} \ndef{\mbD}{{\mathbb D}} \ndef{\bfD}{{\mathbf D}} \ndef{\euD}{{\EuScript D}} \ndef{\frD}{{\mathfrak D}}
\ndef{\clE}{{\mathcal E}} \ndef{\rmE}{{\mathrm E}} \ndef{\mbE}{{\mathbb E}} \ndef{\bfE}{{\mathbf E}} \ndef{\euE}{{\EuScript E}} \ndef{\frE}{{\mathfrak E}}
\ndef{\clF}{{\mathcal F}} \ndef{\rmF}{{\mathrm F}} \ndef{\mbF}{{\mathbb F}} \ndef{\bfF}{{\mathbf F}} \ndef{\euF}{{\EuScript F}} \ndef{\frF}{{\mathfrak F}}
\ndef{\clG}{{\mathcal G}} \ndef{\rmG}{{\mathrm G}} \ndef{\mbG}{{\mathbb G}} \ndef{\bfG}{{\mathbf G}} \ndef{\euG}{{\EuScript G}} \ndef{\frG}{{\mathfrak G}}
\ndef{\clH}{{\mathcal H}} \ndef{\rmH}{{\mathrm H}} \ndef{\mbH}{{\mathbb H}} \ndef{\bfH}{{\mathbf H}} \ndef{\euH}{{\EuScript H}} \ndef{\frH}{{\mathfrak H}}
\ndef{\clI}{{\mathcal I}} \ndef{\rmI}{{\mathrm I}} \ndef{\mbI}{{\mathbb I}} \ndef{\bfI}{{\mathbf I}} \ndef{\euI}{{\EuScript I}} \ndef{\frI}{{\mathfrak I}}
\ndef{\clJ}{{\mathcal J}} \ndef{\rmJ}{{\mathrm J}} \ndef{\mbJ}{{\mathbb J}} \ndef{\bfJ}{{\mathbf J}} \ndef{\euJ}{{\EuScript J}} \ndef{\frJ}{{\mathfrak J}}
\ndef{\clK}{{\mathcal K}} \ndef{\rmK}{{\mathrm K}} \ndef{\mbK}{{\mathbb K}} \ndef{\bfK}{{\mathbf K}} \ndef{\euK}{{\EuScript K}} \ndef{\frK}{{\mathfrak K}}
\ndef{\clL}{{\mathcal L}} \ndef{\rmL}{{\mathrm L}} \ndef{\mbL}{{\mathbb L}} \ndef{\bfL}{{\mathbf L}} \ndef{\euL}{{\EuScript L}} \ndef{\frL}{{\mathfrak L}}
\ndef{\clM}{{\mathcal M}} \ndef{\rmM}{{\mathrm M}} \ndef{\mbM}{{\mathbb M}} \ndef{\bfM}{{\mathbf M}} \ndef{\euM}{{\EuScript M}} \ndef{\frM}{{\mathfrak M}}
\ndef{\clN}{{\mathcal N}} \ndef{\rmN}{{\mathrm N}} \ndef{\mbN}{{\mathbb N}} \ndef{\bfN}{{\mathbf N}} \ndef{\euN}{{\EuScript N}} \ndef{\frN}{{\mathfrak N}}
\ndef{\clO}{{\mathcal O}} \ndef{\rmO}{{\mathrm O}} \ndef{\mbO}{{\mathbb O}} \ndef{\bfO}{{\mathbf O}} \ndef{\euO}{{\EuScript O}} \ndef{\frO}{{\mathfrak O}}
\ndef{\clP}{{\mathcal P}} \ndef{\rmP}{{\mathrm P}} \ndef{\mbP}{{\mathbb P}} \ndef{\bfP}{{\mathbf P}} \ndef{\euP}{{\EuScript P}} \ndef{\frP}{{\mathfrak P}}
\ndef{\clQ}{{\mathcal Q}} \ndef{\rmQ}{{\mathrm Q}} \ndef{\mbQ}{{\mathbb Q}} \ndef{\bfQ}{{\mathbf Q}} \ndef{\euQ}{{\EuScript Q}} \ndef{\frQ}{{\mathfrak Q}}
\ndef{\clR}{{\mathcal R}} \ndef{\rmR}{{\mathrm R}} \ndef{\mbR}{{\mathbb R}} \ndef{\bfR}{{\mathbf R}} \ndef{\euR}{{\EuScript R}} \ndef{\frR}{{\mathfrak R}}
\ndef{\clS}{{\mathcal S}} \ndef{\rmS}{{\mathrm S}} \ndef{\mbS}{{\mathbb S}} \ndef{\bfS}{{\mathbf S}} \ndef{\euS}{{\EuScript S}} \ndef{\frS}{{\mathfrak S}}
\ndef{\clT}{{\mathcal T}} \ndef{\rmT}{{\mathrm T}} \ndef{\mbT}{{\mathbb T}} \ndef{\bfT}{{\mathbf T}} \ndef{\euT}{{\EuScript T}} \ndef{\frT}{{\mathfrak T}}
\ndef{\clU}{{\mathcal U}} \ndef{\rmU}{{\mathrm U}} \ndef{\mbU}{{\mathbb U}} \ndef{\bfU}{{\mathbf U}} \ndef{\euU}{{\EuScript U}} \ndef{\frU}{{\mathfrak U}}
\ndef{\clV}{{\mathcal V}} \ndef{\rmV}{{\mathrm V}} \ndef{\mbV}{{\mathbb V}} \ndef{\bfV}{{\mathbf V}} \ndef{\euV}{{\EuScript V}} \ndef{\frV}{{\mathfrak V}}
\ndef{\clW}{{\mathcal W}} \ndef{\rmW}{{\mathrm W}} \ndef{\mbW}{{\mathbb W}} \ndef{\bfW}{{\mathbf W}} \ndef{\euW}{{\EuScript W}} \ndef{\frW}{{\mathfrak W}}
\ndef{\clX}{{\mathcal X}} \ndef{\rmX}{{\mathrm X}} \ndef{\mbX}{{\mathbb X}} \ndef{\bfX}{{\mathbf X}} \ndef{\euX}{{\EuScript X}} \ndef{\frX}{{\mathfrak X}}
\ndef{\clY}{{\mathcal Y}} \ndef{\rmY}{{\mathrm Y}} \ndef{\mbY}{{\mathbb Y}} \ndef{\bfY}{{\mathbf Y}} \ndef{\euY}{{\EuScript Y}} \ndef{\frY}{{\mathfrak Y}}
\ndef{\clZ}{{\mathcal Z}} \ndef{\rmZ}{{\mathrm Z}} \ndef{\mbZ}{{\mathbb Z}} \ndef{\bfZ}{{\mathbf Z}} \ndef{\euZ}{{\EuScript Z}} \ndef{\frZ}{{\mathfrak Z}}
\ndef{\tA}{{\widetilde A}} \ndef{\tcA}{{\widetilde\clA}} \ndef{\ttcA}{\widetilde{\tcA}} \ndef{\sfA}{{\textsf A}} \ndef{\ttA}{\widetilde{\tA}} \ndef{\dzA}{{A^\sharp}}
\ndef{\tB}{{\widetilde B}} \ndef{\tcB}{{\widetilde\clB}} \ndef{\ttcB}{\widetilde{\tcB}} \ndef{\sfB}{{\textsf B}} \ndef{\ttB}{\widetilde{\tB}} \ndef{\dzB}{{B^\sharp}}
\ndef{\tC}{{\widetilde C}} \ndef{\tcC}{{\widetilde\clC}} \ndef{\ttcC}{\widetilde{\tcC}} \ndef{\sfC}{{\textsf C}} \ndef{\ttC}{\widetilde{\tC}} \ndef{\dzC}{{C^\sharp}}
\ndef{\tD}{{\widetilde D}} \ndef{\tcD}{{\widetilde\clD}} \ndef{\ttcD}{\widetilde{\tcD}} \ndef{\sfD}{{\textsf D}} \ndef{\ttD}{\widetilde{\tD}} \ndef{\dzD}{{D^\sharp}}
\ndef{\tE}{{\widetilde E}} \ndef{\tcE}{{\widetilde\clE}} \ndef{\ttcE}{\widetilde{\tcE}} \ndef{\sfE}{{\textsf E}} \ndef{\ttE}{\widetilde{\tE}} \ndef{\dzE}{{E^\sharp}}
\ndef{\tF}{{\widetilde F}} \ndef{\tcF}{{\widetilde\clF}} \ndef{\ttcF}{\widetilde{\tcF}} \ndef{\sfF}{{\textsf F}} \ndef{\ttF}{\widetilde{\tF}} \ndef{\dzF}{{F^\sharp}}
\ndef{\tG}{{\widetilde G}} \ndef{\tcG}{{\widetilde\clG}} \ndef{\ttcG}{\widetilde{\tcG}} \ndef{\sfG}{{\textsf G}} \ndef{\ttG}{\widetilde{\tG}} \ndef{\dzG}{{G^\sharp}}
\ndef{\tH}{{\widetilde H}} \ndef{\tcH}{{\widetilde\clH}} \ndef{\ttcH}{\widetilde{\tcH}} \ndef{\sfH}{{\textsf H}} \ndef{\ttH}{\widetilde{\tH}} \ndef{\dzH}{{H^\sharp}}
\ndef{\tI}{{\widetilde I}} \ndef{\tcI}{{\widetilde\clI}} \ndef{\ttcI}{\widetilde{\tcI}} \ndef{\sfI}{{\textsf I}} \ndef{\ttI}{\widetilde{\tI}} \ndef{\dzI}{{I^\sharp}}
\ndef{\tJ}{{\widetilde J}} \ndef{\tcJ}{{\widetilde\clJ}} \ndef{\ttcJ}{\widetilde{\tcJ}} \ndef{\sfJ}{{\textsf J}} \ndef{\ttJ}{\widetilde{\tJ}} \ndef{\dzJ}{{J^\sharp}}
\ndef{\tK}{{\widetilde K}} \ndef{\tcK}{{\widetilde\clK}} \ndef{\ttcK}{\widetilde{\tcK}} \ndef{\sfK}{{\textsf K}} \ndef{\ttK}{\widetilde{\tK}} \ndef{\dzK}{{K^\sharp}}
\ndef{\tL}{{\widetilde L}} \ndef{\tcL}{{\widetilde\clL}} \ndef{\ttcL}{\widetilde{\tcL}} \ndef{\sfL}{{\textsf L}} \ndef{\ttL}{\widetilde{\tL}} \ndef{\dzL}{{L^\sharp}}
\ndef{\tM}{{\widetilde M}} \ndef{\tcM}{{\widetilde\clM}} \ndef{\ttcM}{\widetilde{\tcM}} \ndef{\sfM}{{\textsf M}} \ndef{\ttM}{\widetilde{\tM}} \ndef{\dzM}{{M^\sharp}}
\ndef{\tN}{{\widetilde N}} \ndef{\tcN}{{\widetilde\clN}} \ndef{\ttcN}{\widetilde{\tcN}} \ndef{\sfN}{{\textsf N}} \ndef{\ttN}{\widetilde{\tN}} \ndef{\dzN}{{N^\sharp}}
\ndef{\tO}{{\widetilde O}} \ndef{\tcO}{{\widetilde\clO}} \ndef{\ttcO}{\widetilde{\tcO}} \ndef{\sfO}{{\textsf O}} \ndef{\ttO}{\widetilde{\tO}} \ndef{\dzO}{{O^\sharp}}
\ndef{\tP}{{\widetilde P}} \ndef{\tcP}{{\widetilde\clP}} \ndef{\ttcP}{\widetilde{\tcP}} \ndef{\sfP}{{\textsf P}} \ndef{\ttP}{\widetilde{\tP}} \ndef{\dzP}{{P^\sharp}}
\ndef{\tQ}{{\widetilde Q}} \ndef{\tcQ}{{\widetilde\clQ}} \ndef{\ttcQ}{\widetilde{\tcQ}} \ndef{\sfQ}{{\textsf Q}} \ndef{\ttQ}{\widetilde{\tQ}} \ndef{\dzQ}{{Q^\sharp}}
\ndef{\tR}{{\widetilde R}} \ndef{\tcR}{{\widetilde\clR}} \ndef{\ttcR}{\widetilde{\tcR}} \ndef{\sfR}{{\textsf R}} \ndef{\ttR}{\widetilde{\tR}} \ndef{\dzR}{{R^\sharp}}
\ndef{\tS}{{\widetilde S}} \ndef{\tcS}{{\widetilde\clS}} \ndef{\ttcS}{\widetilde{\tcS}} \ndef{\sfS}{{\textsf S}} \ndef{\ttS}{\widetilde{\tS}} \ndef{\dzS}{{S^\sharp}}
\ndef{\tT}{{\widetilde T}} \ndef{\tcT}{{\widetilde\clT}} \ndef{\ttcT}{\widetilde{\tcT}} \ndef{\sfT}{{\textsf T}} \ndef{\ttT}{\widetilde{\tT}} \ndef{\dzT}{{T^\sharp}}
\ndef{\tU}{{\widetilde U}} \ndef{\tcU}{{\widetilde\clU}} \ndef{\ttcU}{\widetilde{\tcU}} \ndef{\sfU}{{\textsf U}} \ndef{\ttU}{\widetilde{\tU}} \ndef{\dzU}{{U^\sharp}}
\ndef{\tV}{{\widetilde V}} \ndef{\tcV}{{\widetilde\clV}} \ndef{\ttcV}{\widetilde{\tcV}} \ndef{\sfV}{{\textsf V}} \ndef{\ttV}{\widetilde{\tV}} \ndef{\dzV}{{V^\sharp}}
\ndef{\tW}{{\widetilde W}} \ndef{\tcW}{{\widetilde\clW}} \ndef{\ttcW}{\widetilde{\tcW}} \ndef{\sfW}{{\textsf W}} \ndef{\ttW}{\widetilde{\tW}} \ndef{\dzW}{{W^\sharp}}
\ndef{\tX}{{\widetilde X}} \ndef{\tcX}{{\widetilde\clX}} \ndef{\ttcX}{\widetilde{\tcX}} \ndef{\sfX}{{\textsf X}} \ndef{\ttX}{\widetilde{\tX}} \ndef{\dzX}{{X^\sharp}}
\ndef{\tY}{{\widetilde Y}} \ndef{\tcY}{{\widetilde\clY}} \ndef{\ttcY}{\widetilde{\tcY}} \ndef{\sfY}{{\textsf Y}} \ndef{\ttY}{\widetilde{\tY}} \ndef{\dzY}{{Y^\sharp}}
\ndef{\tZ}{{\widetilde Z}} \ndef{\tcZ}{{\widetilde\clZ}} \ndef{\ttcZ}{\widetilde{\tcZ}} \ndef{\sfZ}{{\textsf Z}} \ndef{\ttZ}{\widetilde{\tZ}} \ndef{\dzZ}{{Z^\sharp}}
\ndef{\bfa}{{\mathbf a}}
\ndef{\bfb}{{\mathbf b}}
\ndef{\bfc}{{\mathbf c}}
\ndef{\bfd}{{\mathbf d}}
\ndef{\euu}{{\EuScript u}}
  \ndef{\eps}{\varepsilon}
\let\leq\leqslant
\ndef{\lims}[1]{\lim\limits_{#1}}
\ndef{\sums}[1]{\sum\limits_{#1}}
\ndef{\ints}[1]{\int_{#1}}
\ndef{\sups}[1]{\sup\limits_{#1}}
\ndef{\liminfty}[1]{\lims{#1\to\infty}}
\ndef{\suminf}[1]{\sums{#1=1}^\infty}
\ndef{\limo}[1]{\omega\mbox{-}\!\!\!\lims{#1\to\infty}}          
\ndef{\limL}[1]{\rmL\mbox{-}\!\!\!\lims{#1\to\infty}}            
\ndef{\limLOne}[1]{\clL_1\mbox{-}\!\lims{#1}}
\ndef{\tildelimo}[1]{\tilde\omega\mbox{-}\!\!\!\lims{#1\to\infty}}
\ndef{\slim}{\mathrm{s}\mbox{-}\!\!\lim}          
\ndef{\wlim}{\mathrm{w}\mbox{-}\!\lim}          
\ndef{\Aut}{\operatorname{Aut}}      
\ndef{\Ch}{\operatorname{ch}}        
\ndef{\End}{\operatorname{End}}      
\ndef{\Hom}{\operatorname{Hom}}      
\rndef{\ker}{\operatorname{ker}}      
\ndef{\coker}{\operatorname{coker}}      
\ndef{\im}{\operatorname{im}}        
\ndef{\Log}{\operatorname{Log}}      
\ndef{\OP}{\operatorname{OP}}        
\ndef{\Op}{\operatorname{Op}}        
\ndef{\Symb}{\operatorname{Symb}}    
\ndef{\Tr}{\operatorname{Tr}}        
\ndef{\Wres}{\operatorname{Wres}}    
\ndef{\cl}{\operatorname{cl}}        
\ndef{\com}{\operatorname{com}}
\ndef{\const}{\operatorname{const}}  
\ndef{\conv}{\operatorname{conv}}    
\rndef{\det}{\operatorname{det}}     
\ndef{\detFK}[1]{\Delta\brs{#1}} 
\ndef{\detFKrel}[2]{\Delta_{#2}\brs{#1}} 
\ndef{\adj}{\operatorname{adj}}    
\ndef{\diag}{\operatorname{diag}}    
\ndef{\dist}{\operatorname{dist}}    
\ndef{\dom}{\operatorname{dom}}      
\ndef{\ec}{\operatorname{ec}}        
\ndef{\id}{1}                        
\ndef{\ind}{\operatorname{ind}}      
\ndef{\mydeg}{\operatorname{deg}}    
\ndef{\op}{\operatorname{op}}
\ndef{\rank}{\operatorname{rank}}
\ndef{\res}{\operatorname{res}}      
\ndef{\rng}{\operatorname{ran}}      
\ndef{\sflow}{\operatorname{sf}}     
\ndef{\isf}{\operatorname{isf}}      
\ndef{\sign}{\operatorname{sign}}    
\ndef{\sgn}{\operatorname{sgn}}      
\ndef{\sing}{\operatorname{sing}}    
\ndef{\supp}{\operatorname{supp}}    
\ndef{\tr}{\operatorname{tr}}        
\ndef{\var}{\operatorname{var}}      
\ndef{\vol}{\operatorname{vol}}      
\ndef{\wn}{\operatorname{wn}}        
\ndef{\wres}{\operatorname{wres}}    
\rndef{\Im}{\operatorname{Im}}       
\rndef{\Re}{\operatorname{Re}}       
\ndef{\prng}[1]{\mathrm R_{#1}} 
\ndef{\pker}[1]{\mathrm N_{#1}} 
\ndef{\rprng}[2]{\mathrm R_{#1}^{#2}}           
\ndef{\rpker}[2]{\mathrm N_{#1}^{#2}}           
\ndef{\rsupp}[1]{\supp_r(#1)}
\ndef{\lsupp}[1]{\supp_l(#1)}
\ndef{\rslv}[1]{R_z(#1)}      
\ndef{\HH}{H}                 
\ndef{\tHH}{\tilde \HH}       
\ndef{\VV}{V}                 
\ndef{\Rz}{R_z}               
\ndef{\tRz}{\tR_z}            
\ndef{\psif}[1]{#1^{[1]}} 
\ndef{\WPlus}[1]{W_{#1}(\mbR)} 
\newcommand{\xia}{\xi^{(a)}}
\newcommand{\xis}{\xi^{(s)}}
\ndef{\bndl}{\xi}                         
\ndef{\bndlA}{\eta}                       
\ndef{\GlueMap}{\varphi}                  
\ndef{\ChartMap}{h}                       
\ndef{\chern}{\ensuremath{\mathrm{ch}}}
\ndef{\hilb}{\clH}                     
\ndef{\hilba}{\clH^{(a)}}                    
\ndef{\hilbs}{\clH^{(s)}}                    
   \ndef{\hilbasargument}{(\hilb)} 
\ndef{\LpH}[1]{\clL_{#1}\hilbasargument}       
\ndef{\saLpH}[1]{\clL_{sa}^{#1}\hilbasargument}       
\ndef{\clBH}{\clB\hilbasargument}              
\ndef{\ubBH}{\clB_1\hilbasargument}            
\ndef{\clCH}{\clC\hilbasargument}              
\ndef{\clKH}{\clK\hilbasargument}              
\ndef{\clFH}{\clF\hilbasargument}              
\ndef{\clUH}{\clU\hilbasargument}              
\ndef{\clCFH}{{\clC\clF}\hilbasargument}       
\ndef{\saBH}{\clB_{sa}\hilbasargument}         
\ndef{\saCH}{\clC_{sa}\hilbasargument}         
\ndef{\saFH}{\clF_{sa}\hilbasargument}         
\ndef{\saKH}{\clK_{sa}\hilbasargument}         
\ndef{\saCFH}{\clC\clF_{sa}\hilbasargument}    
\ndef{\clUFH}{\clU\clF\hilbasargument}         
\ndef{\Uinj}{\clU_{inj}\hilbasargument}        
\ndef{\UFinj}{\clU\clF_{inj}\hilbasargument}   
\ndef{\spproj}[2]{E^{#1}_{#2}}                      
\ndef{\spprojb}[2]{E^{#2}_{#1}}                     
\ndef{\LpN}[1]{\clL^{#1}(\clN,\tau)}     
\ndef{\saLpN}[1]{\clL^{#1}_{sa}(\clN,\tau)} 
\ndef{\rLpN}[1]{L^{#1}(\clN,\tau)}       
\ndef{\clAND}{(\clA,\clN,D)}             
\ndef{\clBA}{{\clB(\clA)}}
\ndef{\saKN}{{\clK_{sa}(\clN,\tau)}}          
\ndef{\clKN}{{\clK(\clN,\tau)}}          
\ndef{\clKtN}{{\clK(\tilde\clN,\tau)}}   
\ndef{\clFN}{{\clF(\clN,\tau)}}          
\ndef{\saFN}{{\clF_{sa}(\clN,\tau)}}     
\ndef{\clPN}{\clP(\clN)}                 
\ndef{\clQN}{\clQ(\clN,\tau)}            
\ndef{\infPN}{{\clP_\tau^\infty(\clN)}}  
\ndef{\clOF}[2]{\clF_{#1\mbox{-}#2}(\clN,\tau)}         
\ndef{\oind}[2]{{\rm \tau\mbox{-}ind}_{#1\mbox{-}#2}}   
\ndef{\tind}{\tau\mbox{-}\ind}                  
\ndef{\DInd}{\ind_{\clD,\tau}}           
\ndef{\BF}{Breuer-Fredholm}              
\ndef{\skewfred}[2]{$(#1\cdot #2)$ $\tau$\tire Fredholm}   
\ndef{\affl}{\eta}                       
\ndef{\vNa}{von Neumann algebra}         
\ndef{\nsf}{faithful normal semifinite } 
\ndef{\taubrs}[1]{\tau\brackets{#1}}     
\ndef{\sqbrs}[1]{[#1]}        
\ndef{\Sqbrs}[1]{\big[#1\big]}        
\ndef{\SqBrs}[1]{\Big[#1\Big]}        
\ndef{\domd}{\bigcap\limits_{n\ge 0} \dom\;\delta^n}         
\ndef{\DiffOP}{{\rm \clD}}
\ndef{\ADA}{\clA \cup [\clD,\clA]}
\ndef{\DixIdeal}[1]{\LpH{#1,\infty}}               
\ndef{\dixideal}{\ell^{1,\infty}}                  
\ndef{\WDixIdeal}{\LpH{1,\mathrm w}}               
\ndef{\DixIdealPos}[1]{\DixIdeal{#1}_+}            
\ndef{\DixIdealN}[1]{\LpN{#1,\infty}}              
\ndef{\DixIdealNPar}[2]{\clL^{#1,\infty}_{#2}(\clN,\tau)}    
\ndef{\DixIdealNPos}[1]{\LpN{#1,\infty}_+}                   
\ndef{\TrD}{\Tr_\omega}                                      
\ndef{\tauD}{{\tau_\omega}}                                  
\ndef{\ILogN}{\frac 1{\log(1+N)}}
\ndef{\DixNorm}[1]{\norm{#1}_{(1,\infty)}}                   
\ndef{\DixInt}[1]{\ints 0^t \mu_s(#1)\,ds}
\ndef{\DixIntL}[1]{\ints 0^{\lambda_{1/t}(#1)}\mu_s(#1)\,ds}
    \ndef{\SmallIdeal}{{\clL^{1, \mathrm w}}}
    \ndef{\SmallIdealMeas}{{\clL^{1, \mathrm w}_m}}
    \ndef{\DixIntII}[1]{\int_0^t \mu_s(#1)\,ds}
    \ndef{\DixIntf}[1]{\Phi_t(#1)}
    \ndef{\DixIntg}[1]{\Psi_t(#1)}
\ndef{\lpi}{\clL^{1,\pi}(\clN,\tau)}
\ndef{\strl}[1]{\stackrel \longrightarrow {#1}}
\ndef{\IIinfty}{$\mathrm{II}_\infty$\ }
\ndef{\fourier}[1]{\clF(#1)}          
\ndef{\HaarMeasBohrs}{\nu}            
\ndef{\BrownsMeas}{\mu}               
\ndef{\BohrCont}[1]{\tilde{#1}}       
\ndef{\APMean}{{M}}                   
\ndef{\CDSS}{{\clA_B}}                
\ndef{\matr}{{\rm Mat}}               
\ndef{\seque}[1]{\ensuremath{\{#1_n\}_{n=1}^\infty}}    
\ndef{\sequen}[2]{\ensuremath{\{#1_#2\}_{#2=1}^\infty}}    
\ndef{\Seque}[1]{\ensuremath{\left(#1_0,#1_1,#1_2,\dots\right)}}    
\ndef{\Cesaro}{H}                           
\ndef{\CesaroRPlus}{M}                      
\ndef{\Dilation}{D}                         
\ndef{\Shift}{T}                            
\ndef{\norm}[1]{\left\Vert#1\right\Vert}    
\ndef{\TrNorm}[1]{\norm{#1}_1}              
\ndef{\HSNorm}[1]{\norm{#1}_2}              
\ndef{\InftyNorm}[1]{\norm{#1}_\infty}      
\ndef{\normQN}[1]{\norm{#1}_{\clQN}}        
\ndef{\clLpnorm}[2]{\norm{#2}_{\clL^{#1}}}    
\ndef{\clLnorm}[1]{\clLpnorm{1}{#1}}    
\ndef{\ccurve}{\gamma}                      
\ndef{\abs}[1]{\left\lvert#1\right\rvert}   
\ndef{\set}[1]{\left\{#1\right\}}           
\ndef{\brackets}[1]{\left(#1\right)}        
\ndef{\brs}[1]{\brackets{#1}}               
\ndef{\Brs}[1]{\big(#1\big)}                
\ndef{\BRS}[1]{\Big(#1\Big)}                
\ndef{\scal}[2]{\left\la #1,#2\right\ra}               
\ndef{\precprec}{\prec\!\!\!\prec}
\ndef{\qeq}{\stackrel?=}
\ndef{\spectrum}[1]{\sigma_{#1}} 
\ndef{\spectruma}[1]{\sigma^{(a)}_{#1}} 
\ndef{\numrange}[1]{\mathrm{W}(#1)}                         
\rndef{\emptyset}{\varnothing}                              
\ndef{\csupp}{c}                           
\ndef{\closure}[1]{\overline{#1}}
\ndef{\linspan}[1]{\mathrm{span}\ {#1}}
\ndef{\bddborel}[1]{B(#1)}                 
\ndef{\charfunc}{\chi}
\ndef{\FrDer}{\euD}                        
\ndef{\LieDer}[1]{\pounds_{#1}\,}          
\ndef{\dds}{\left.\frac d{ds} \right|_{s = 0}}
\ndef{\ortcmp}[1]{#1^{\scriptscriptstyle \perp}}            
\ndef{\Laplace}{\Delta}                    
\ndef{\matrPQ}[3]
{
    \left(
      \begin{array}{cc}
        #1_{11} & #1_{12} \\
        #1_{21} & #1_{22}
      \end{array}
    \right)_{[#2,#3]}
}
\ndef{\margOK}{\marginpar{\bf \small OK}}
\newcounter{margcomcount}
\ndef{\margcom}[1]{\marginpar{\bf \small #1} \addtocounter{margcomcount}{1}
   \index{\indexcom{{\bf COMMENT: #1}}}}
\newcounter{margproof}
\ndef{\margproof}{\marginpar{\bf \small PROOF} \addtocounter{margproof}{1}
  \index{**** \indexcom{{\bf PROOF}}}}
\newcounter{margdetails}
\ndef{\margdetails}{\marginpar{\bf Details} \addtocounter{margdetails}{1}
  \index{**** \indexcom{{\bf DETAILS}}}}
\newcounter{margproofb}
\ndef{\margproofb}[1]{\marginpar{\bf \small Proof(B) #1} \addtocounter{margproofb}{1}
  \index{**** \indexcom{{\bf PROOF(B): #1}}}}
\newcounter{margdetailsb}
\ndef{\margdetailsb}[1]{\marginpar{\bf \small Details(B)} \addtocounter{margdetailsb}{1}
  \index{**** \indexcom{{\bf DETAILS(B): \\ #1}}}}
\newcounter{margdetailsc}
\ndef{\margdetailsc}[1]{\marginpar{\bf \small Details(C)} \addtocounter{margdetailsc}{1}
  \index{**** \indexcom{{\bf DETAILS(C): \\ #1}}}}
\newcounter{margcomcountb}
\ndef{\margcomb}[1]{\marginpar{\bf \small #1} \addtocounter{margcomcountb}{1}
   \index{\indexcom{{\bf COMMENT(B): \\ #1}}}}
\ndef{\mytimes}{\!\times\!}
\ndef{\sss}[1]{\subsubsection{}\label{#1}}
\rndef{\phi}{\varphi}
\ndef{\OpenUnitDisk}{D}
\ndef{\RHS}{RHS}                            
\ndef{\LHS}{LHS} 
\ndef{\ttt}{\Leftrightarrow}
\ndef{\then}{\Rightarrow}
\ndef{\tto}{\longrightarrow}
\ndef{\nno}{\nonumber\\}
\ndef{\newn}[1]{\index{#1} {\bfseries #1}}       
\ndef{\la}{\langle}
\ndef{\ra}{\rangle} \ndef{\dbar}{{\;\bar{\phantom{o}} \!\!\!\! d}}
\ndef{\stl}[1]{\stackrel{\vbox to 0pt{\vss\hbox{$\scriptstyle #1$}}}}
\ndef{\mathcomment}[1]{{\hfill \qquad\qquad\qquad\qquad\qquad\text{by (#1)}}}        
\ndef{\mathcomm}[1]{{\hfill \qquad\qquad\qquad\qquad\qquad\text{#1}}}        
\ndef{\details}[1]{\smallskip\begin{center} {\bf Here:}
#1\end{center}\medskip} \ndef{\indexcom}[1]{ --- #1}
\ndef{\longsim}{\ \sim \ }              
\ndef{\tire}{-}              
\ndef{\intinfinf}{\int_{-\infty}^\infty}
\ndef{\refnsftrace}{\cite[V.\,2.\,1]{TakI}} 
\ndef{\refaffloper}{\cite[IV.\,5, Exercise 3]{TakI}} 
\ndef{\refsemifinvNa}{\cite[V.\,1.\,21]{TakI}} 
\ndef{\reftaumeasurable}{\cite[Definition 1.2]{FK86PJM}} 
\ndef{\reftautraceclassaffl}{\cite[V.2, p.\,320]{TakI}} 
\ndef{\refinvoperideal}{\cite[Appendix A.2]{CP2}} 
\ndef{\reftautracenorm}{\cite[V.2, p.\,320]{TakI}} 
\ndef{\reftaucompact}{\cite{}} 
\ndef{\reftauFredholm}{\cite[Appendix B]{PR94JFA}} 
     \ndef{\npartial}{\slash\!\!\!\partial}
     \ndef{\Heis}{\operatorname{Heis}}
     \ndef{\Solv}{\operatorname{Solv}}
     \ndef{\Spin}{\operatorname{Spin}}
     \ndef{\SO}{\operatorname{SO}}
     \ndef{\Index}{\operatorname{index}}
             \ndef{\p}{\partial}
             \ndef{\dd}{|\clD|}
             \ndef{\n}{\parallel}
     \ndef{\gf}[2]{\genfrac{}{}{0pt}{}{#1}{#2}}
     \ndef{\ta}{\widetilde{\alpha}}
     \ndef{\tb}{\widetilde{\beta}}
     \ndef{\txi}{\widetilde{\xi}}
     \ndef{\tk}{\widetilde{K}}
     \ndef{\CGh}{\widetilde{\CG}}
     \ndef{\boe}{{\bf e}}\ndef{\bt}{{\bf t}}
     \ndef{\vth}{\vartheta}
     \ndef{\db}{\overline{\partial}}
     \ndef{\hV}{\hat{V}}
     \ndef{\cag}{{\clA^\Gamma}}
     \ndef{\sind}{\sigma{\rm -ind}}
\newcounter{slidecount}
\newcommand{\slide}[2]{
  \newpage
  \addtocounter{slidecount}{1}
  \renewcommand{\@oddhead}{{\small Advanced Mathematics 1A -- 2009 \hfil Slide #1-\arabic{slidecount}}}
  \begin{center} \bf #2 \end{center}
}
\let\LatexCite=\cite  
\let\ifnumref\iffalse 
\ndef{\ifuncited}[4]{\expandafter\ifx\csname used#4\endcsname\relax}
\ndef{\ifcited}[4]{\expandafter\ifx\csname used#4\endcsname\relax\else}
  \ndef{\papertitle}[1]{ \emph{#1}, }
  \ndef{\paperauthor}[2]{#2}  
  \ndef{\pbbi}[9]{%
      \ifcited{#1}{#2}{#3}{#5}%
        \ifnumref%
          \bibitem{#5}\paperauthor{#1}{#6},\papertitle{#7}#8.%
        \else%
          \advance #9 by 1%
          \ifnum#9<1%
            \bibitem[#4]{#5}\paperauthor{#1}{#6}, \papertitle{#7}#8.%
          \else%
            \bibitem[#4$_{\the#9}$]{#5}\paperauthor{#1}{#6},\papertitle{#7}#8.%
          \fi%
        \fi%
      \fi%
  }
  \ndef{\mbbi}[8]{%
     \ifcited{#1}{#2}{#3}{#5}%
        \ifnumref%
          \bibitem{#5}\paperauthor{#1}{#6},\papertitle{#7}#8.%
        \else%
          \bibitem[#4]{#5}\paperauthor{#1}{#6},\papertitle{#7}#8.%
        \fi%
     \fi%
  }
\ndef{\AddCite}[1]{%
   \ifuncited{0}{0}{0}{#1}%
     \expandafter\gdef\csname used#1\endcsname {}%
   \fi%
}
\def\ProcessCite#1,{%
     \ifx\relax#1%
         \let\next=\relax%
     \else%
         \AddCite{#1}%
         \let\next=\ProcessCite%
     \fi%
     \next%
}
\ndef{\AddCites}[1]{\ProcessCite#1,\relax,}
\ndef{\CiteWithoutExtension}[1]{%
   \AddCites{#1}%
   \LatexCite{#1}%
}
\def\CiteWithExtension[#1]#2{%
   \AddCites{#2}%
   \LatexCite[#1]{#2}%
}
\ndef{\CleverCite}{%
    \ifx\NChar[ %
       \let\MyCite=\CiteWithExtension %
    \else %
       \let\MyCite=\CiteWithoutExtension %
    \fi %
    \MyCite%
}
\renewcommand{\cite}{\futurelet\NChar\CleverCite}
      \ndef{\volume}[1]{{\bf #1}}
      \ndef{\VolYearPP}[3]{\ifnum#2=0 (to appear)\else\volume{#1} (#2), #3\fi}
      \ndef{\VolNoYearPP}[4]{\ifnum#3=0 (to appear)\else\volume{#1} #2 (#3), #4\fi}
      \ndef{\libcode}[1]{}
\ndef{\jnActaMath}[3]{Acta Math. \VolYearPP{#1}{#2}{#3}}                       
\ndef{\jnAdvMath}[3]{Adv. in~Math. \VolYearPP{#1}{#2}{#3}}                     
\ndef{\jnAlgAnal}[3]{Algebra i~Analiz \VolYearPP{#1}{#2}{#3}}
\ndef{\jnAmerJMath}[3]{Amer. J. Math. \VolYearPP{#1}{#2}{#3}}                  
\ndef{\jnAmerMathMonth}[3]{Amer. Math. Monthly \VolYearPP{#1}{#2}{#3}}         
\ndef{\jnAnnMath}[4]{Ann. of~Math. \VolNoYearPP{#1}{#2}{#3}{#4}}               
\ndef{\jnAnalMath}[3]{J. Anal. Math. \VolYearPP{#1}{#2}{#3}}                   
\ndef{\jnBullLondMathSoc}[3]{Bull. London Math. Soc. \VolYearPP{#1}{#2}{#3}}   
\ndef{\jnBullAMS}[3]{Bull. Amer. Math. Soc. \VolYearPP{#1}{#2}{#3}}   
\ndef{\jnCanMathBull}[3]{Canad. Math. Bull. \VolYearPP{#1}{#2}{#3}}            
\ndef{\jnCanMath}[3]{Canad. J.~Math. \VolYearPP{#1}{#2}{#3}}             
\ndef{\jnCommMathPhys}[3]{Comm. Math. Phys. \VolYearPP{#1}{#2}{#3}}             
\ndef{\jnCommPDE}[3]{Comm. Partial Differential Equations \VolYearPP{#1}{#2}{#3}}             
\ndef{\jnComptRendue}[3]{C.\,R.~Acad. Sci. Paris S\'er. A-B \VolYearPP{#1}{#2}{#3}}      
\ndef{\jnContMath}[3]{Contemporary Math. \VolYearPP{#1}{#2}{#3}}               %
\ndef{\jnDukeMJ}[3]{Duke Math. J. \VolYearPP{#1}{#2}{#3}}
\ndef{\jnDiffGeom}[3]{J.~Diff. Geom. \VolYearPP{#1}{#2}{#3}}                   
\ndef{\jnErgodicTheory}[3]{Ergodic Theory and Dynamical Systems \VolYearPP{#1}{#2}{#3}} 
\ndef{\jnFuncAnal}[3]{J.~Functional Analysis \VolYearPP{#1}{#2}{#3}}           
\ndef{\jnFunkAnalPril}[4]{Funct. Anal. Appl. \VolNoYearPP{#1}{#2}{#3}{#4}}  
\ndef{\jnGAFA}[3]{GAFA \VolYearPP{#1}{#2}{#3}}                                 
\ndef{\jnIHES}[3]{IHES Publ. Math. (Paris) \VolYearPP{#1}{#2}{#3}}             
\ndef{\jnIEOT}[3]{Integral Equations Operator Theory   \VolYearPP{#1}{#2}{#3}} 
\ndef{\jnIsrMath}[3]{Israel J.~Math. \VolYearPP{#1}{#2}{#3}}                   
\ndef{\jnKTheory}[3]{K-Theory \VolYearPP{#1}{#2}{#3}}                          
\ndef{\jnLetMathPhys}[3]{Lett. Math. Phys. \VolYearPP{#1}{#2}{#3}}             
\ndef{\jnMathAnn}[3]{Math. Ann. \VolYearPP{#1}{#2}{#3}}                        
\ndef{\jnMathAnalAppl}[3]{J.~Math. Anal. and Appl. \VolYearPP{#1}{#2}{#3}}     
\ndef{\jnMathNachr}[3]{Math. Nachr. \VolYearPP{#1}{#2}{#3}}
\ndef{\jnMathPhys}[3]{J. Math. Phys. \VolYearPP{#1}{#2}{#3}}
\ndef{\jnMathSocJap}[3]{J. Math. Soc. Japan \VolYearPP{#1}{#2}{#3}}
\ndef{\jnOperTheory}[3]{J.~Operator Theory \VolYearPP{#1}{#2}{#3}}             
\ndef{\jnPacJMath}[3]{Pacific J.~Math. \VolYearPP{#1}{#2}{#3}}                  
\ndef{\jnPositivity}[3]{Positivity \VolYearPP{#1}{#2}{#3}}
\ndef{\jnProcAmerMS}[3]{Proc. Amer. Math. Soc. \VolYearPP{#1}{#2}{#3}}         
\ndef{\jnProcCambPhilSoc}[3]{Math. Proc. Camb. Phil. Soc. \VolYearPP{#1}{#2}{#3}}
\ndef{\jnReineAngew}[3]{J.~Reine Angew. Math. \VolYearPP{#1}{#2}{#3}}          
\ndef{\jnTokyoMath}[3]{Tokyo J.~Math. \VolYearPP{#1}{#2}{#3}}
\ndef{\jnTopology}[3]{Topology \VolYearPP{#1}{#2}{#3}}
\ndef{\jnTransAmerMathSoc}[3]{Trans. Amer. Math. Soc. \VolYearPP{#1}{#2}{#3}}
\ndef{\jnIzvANSSSR}[3]{Izv. Akad. Nauk SSSR, Ser. Mat. \VolYearPP{#1}{#2}{#3}}
\ndef{\jnIzvVyshUchZav}[3]{Izv. Vyssh. Uch. Zav., Mat. \VolYearPP{#1}{#2}{#3} (Russian)}
\ndef{\jnIzdatLenUniv}[2]{Izdat. Leningrad. Univ., Leningrad, (#1), #2 (Russian)}
\ndef{\jnFieldsInsComm}[3]{Fields Inst. Comm. \VolYearPP{#1}{#2}{#3}}
\ndef{\jnDoklANSSSR}[3]{Dokl. Akad. Nauk SSSR \VolYearPP{#1}{#2}{#3}}
\ndef{\jnMatZametki}[3]{Matem. zametki \VolYearPP{#1}{#2}{#3}}
\ndef{\jnRussMathSurvey}[3]{Russian Math. Surveys \VolYearPP{#1}{#2}{#3}}
\ndef{\jnSibMathJ}[3]{Sib. Math.~J. \VolYearPP{#1}{#2}{#3}}
\ndef{\jnSovMath}[3]{J.~Soviet math. \VolYearPP{#1}{#2}{#3}}
\ndef{\jnTransMoscMathSoc}[3]{Trans. Moscow Math. Soc. \VolYearPP{#1}{#2}{#3}}
\ndef{\jnUMN}[3]{Uspekhi Mat. Nauk \VolYearPP{#1}{#2}{#3}}
\ndef{\bkTransMathMon}[2]{Trans. Math. Monographs, AMS, \volume{#1}, #2}
\ndef{\pbBirkhauser}[1]{Birkh\"auser, Boston, #1}
\ndef{\pbFactorial}[1]{Moscow, Factorial, #1}
\ndef{\pbGauthier}[1]{Gauthier-Villars, Paris, #1}
\ndef{\pbNauka}[1]{Moscow, Nauka, #1 (Russian)}
\ndef{\pbNaukaR}[1]{Москва, Наука, #1}
\ndef{\pbPrinceton}[1]{Princeton University Press, Princeton, New Jersey, #1}
\ndef{\pbPublPerish}[1]{Publish or Perish Inc., Berkeley, #1}
\ndef{\pbSpringer}[1]{Springer-Verlag, #1}
\ndef{\myauthor}[1]{\mbox{#1}}
\ndef{\Agmon}{\myauthor{Sh.\,Agmon}}
\ndef{\Ahiezer}{\myauthor{N.\,I.\,Ahiezer}}
\ndef{\Arazy}{\myauthor{J.\,Arazy}}
\ndef{\Aronszajn}{\myauthor{N.\,Aronszajn}}
\ndef{\Astashkin}{\myauthor{S.\,V.\,Astashkin}}
\ndef{\Atiyah}{\myauthor{M.\,Atiyah}}
\ndef{\Avron}{\myauthor{J.\,E.\,Avron}}
\ndef{\Azamov}{\myauthor{N.\,A.\,Azamov}}
\ndef{\Banach}{\myauthor{S.\,Banach}}
\ndef{\Benameur}{\myauthor{M-T.\,Benameur}}
\ndef{\Bennett}{\myauthor{C.\,Bennett}}
\ndef{\Berezin}{\myauthor{F.\,A.\,Berezin}}
\ndef{\Berline}{\myauthor{N.\,Berline}}
\ndef{\Birman}{\myauthor{M.\,Sh.\,Birman}}
\ndef{\Blackadar}{\myauthor{B.\,Blackadar}}
\ndef{\Bogolyubov}{\myauthor{N.\,N.\,Bogolyubov}}
\ndef{\Bonsall}{\myauthor{F.\,F.\,Bonsall}}
\ndef{\Bony}{\myauthor{J.\,F.\,Bony}}
\ndef{\BoosBavnbek}{\myauthor{B.\,Boo$\beta$-Bavnbek}}
\ndef{\Bott}{\myauthor{R.\,Bott}}
\ndef{\Branges}{\myauthor{L.\,de Branges}}
\ndef{\Bratteli}{\myauthor{O.\,Bratteli}}
\ndef{\Bredon}{\myauthor{G.\,E.\,Bredon}}
\ndef{\Breuer}{\myauthor{M.\,Breuer}}
\ndef{\Brown}{\myauthor{L.\,G.\,Brown}}
\ndef{\Bruneau}{\myauthor{V.\,Bruneau}}
\ndef{\Buslaev}{\myauthor{V.\,S.\,Buslaev}}
\ndef{\Carey}{\myauthor{A.\,L.\,Carey}}
\ndef{\CareyRW}{\myauthor{R.\,W.\,Carey}} 
\ndef{\Cartan}{\myauthor{H.\,Cartan}}
\ndef{\Chilin}{\myauthor{V.\,I.\,Chilin}}
\ndef{\Coburn}{\myauthor{L.\,A.\,Coburn}}
\ndef{\Connes}{\myauthor{A.\,Connes}}
\ndef{\Cornfeld}{\myauthor{I.\,P.\,Cornfeld}}
\ndef{\Daletskii}{\myauthor{Yu.\,L.\,Daletski\u\i}}   
\ndef{\Dixmier}{\myauthor{J.\,Dixmier}}
\ndef{\DoddsPG}{\myauthor{P.\,G.\,Dodds}}
\ndef{\DoddsTK}{\myauthor{T.\,K.\,Dodds}}
\ndef{\Douglas}{\myauthor{R.\,G.\,Douglas}}
\ndef{\Dubrovin}{\myauthor{B.\,A.\,Dubrovin}}
\ndef{\Dugundji}{\myauthor{J.\,Dugundji}}
\ndef{\Duncan}{\myauthor{J.\,Duncan}}
\ndef{\Dunford}{\myauthor{N.\,Dunford}}
\ndef{\Dykema}{\myauthor{K.\,J.\,Dykema}}
\ndef{\Edwards}{\myauthor{R.\,E.\,Edwards}}
\ndef{\Eilenberg}{\myauthor{S.\,Eilenberg}}
\ndef{\Entina}{\myauthor{S.\,B.\,\`Entina}}
\ndef{\Fack}{\myauthor{T.\,Fack}} 
\ndef{\Faddeev}{\myauthor{L.\,D.\,Faddeev}}
\ndef{\Farber}{\myauthor{M.\,Farber}}
\ndef{\Farforovskaya}{\myauthor{Yu.\,B.\,Farforovskaya}}
\ndef{\Federer}{\myauthor{H.\,Federer}}
\ndef{\Fedosov}{\myauthor{B.\,V.\,Fedosov}}
\ndef{\Figiel}{\myauthor{T.\,Figiel}} 
\ndef{\Figueroa}{\myauthor{H.\,Figueroa}}
\ndef{\Fillmore}{\myauthor{P.\,A.\,Fillmore}}
\ndef{\Fomenko}{\myauthor{A.\,T.\,Fomenko}} 
\ndef{\Fomin}{\myauthor{S.\,V.\,Fomin}}
\ndef{\Frohlich}{\myauthor{J.\,Fr\"ohlich}}
\ndef{\Fuglede}{\myauthor{B.\,Fuglede}}
\ndef{\Furutani}{\myauthor{K.\,Furutani}}
\ndef{\Gelfand}{\myauthor{I.\,M.\,Gelfand}}
\ndef{\Gesztesy}{\myauthor{F.\,Gesztesy}}     
\ndef{\Getzler}{\myauthor{E.\,Getzler}} 
\ndef{\Gilkey}{\myauthor{P.\,B.\,Gilkey}}
\ndef{\Gitler}{\myauthor{S.\,Gitler}}
\ndef{\Glazman}{\myauthor{I.\,M.\,Glazman}}
\ndef{\Glimm}{\myauthor{J.\,Glimm}}
\ndef{\Gohberg}{\myauthor{I.\,C.\,Gohberg}}
\ndef{\Goldshtein}{\myauthor{Ya.\,Goldshtein}}
\ndef{\Golze}{\myauthor{F.\,Golze}}
\ndef{\GraciaBondia}{\myauthor{J.\,M.\,Gracia-Bond\'{i}a}}
\ndef{\Greenleaf}{\myauthor{F.\,P.\,Greenleaf}}
\ndef{\Gromov}{\myauthor{M.\,Gromov}}
\ndef{\Gunning}{\myauthor{R.\,C.\,Gunning}}
\ndef{\Haagerup}{\myauthor{U.\,Haagerup}}
\ndef{\Haag}{\myauthor{R.\,Haag}}
\ndef{\Halmos}{\myauthor{Halmos}}
\ndef{\Hardy}{\myauthor{G.\,H.\,Hardy}}
\ndef{\Herbst}{\myauthor{I.\,W.\,Herbst}}
\ndef{\Higson}{\myauthor{N.\,Higson}}  
\ndef{\Hoermander}{\myauthor{L.\,Hoermander}} 
\ndef{\Hoffman}{\myauthor{K.\,Hoffman}} 
\ndef{\Ito}{\myauthor{K.\,Ito}}
\ndef{\Jaffe}{\myauthor{A.\,Jaffe}}
\ndef{\James}{\myauthor{I.\,M.\,James}}
\ndef{\Javrjan}{\myauthor{V.\,A.\,Javrjan}}
\ndef{\Jitomirskaya}{\myauthor{S.\,Jitomirskaya}}
\ndef{\Kadison}{\myauthor{R.\,V.\,Kadison}}
\ndef{\Kalton}{\myauthor{N.\,J.\,Kalton}} 
\ndef{\Kato}{\myauthor{T.\,Kato}} 
\ndef{\Kobayashi}{\myauthor{S.\,Kobayashi}}
\ndef{\Koplienko}{\myauthor{L.\,S.\,Koplienko}}
\ndef{\Korotyaev}{\myauthor{E.\,Korotyaev}}
\ndef{\Kosaki}{\myauthor{H.\,Kosaki}}
\ndef{\Kostrykin}{\myauthor{V.\,Kostrykin}}
\ndef{\Kotani}{\myauthor{S.\,Kotani}}
\ndef{\Krein}{\myauthor{Kre\u\i n}}
\ndef{\KreinMG}{\myauthor{M.\,G.\,Kre\u\i n}}
\ndef{\KreinSG}{\myauthor{S.\,G.\,Kre\u\i n}}
\ndef{\Kuroda}{\myauthor{S.\,T.\,Kuroda}}
\ndef{\Leichtnam}{\myauthor{E.\,Leichtnam}}
\ndef{\Lesch}{\myauthor{M.\,Lesch}}
\ndef{\Lesniewski}{\myauthor{A.\,Lesniewski}}
\ndef{\Levitan}{\myauthor{B.\,M.\,Levitan}}
\ndef{\Lidskii}{\myauthor{V.\,B.\,Lidskii}}
\ndef{\Lifshits}{\myauthor{I.\,M.\,Lifshits}}
\ndef{\Lindenstrauss}{\myauthor{J.\,Lindenstrauss}}
\ndef{\Loday}{\myauthor{J.-L.\,Loday}}
\ndef{\Lord}{\myauthor{S.\,Lord}}      
\ndef{\Lorentz}{\myauthor{G.\,Lorentz}}
\ndef{\Magnus}{\myauthor{W.\,Magnus}}
\ndef{\Makarov}{\myauthor{K.\,A.\,Makarov}}
\ndef{\MakarovN}{\myauthor{N.\,Makarov}}
\ndef{\Mathai}{\myauthor{V.\,Mathai}}         
\ndef{\McKean}{\myauthor{H.\,P.\,McKean}}
\ndef{\Mishchenko}{\myauthor{A.\,S.\,Mishchenko}}
\ndef{\Molchanov}{\myauthor{S.\,A.\,Molchanov}}
\ndef{\Moore}{\myauthor{C.\,C.\,Moore}}
\ndef{\Moscovici}{\myauthor{H.\,Moscovici}}  
\ndef{\Motovilov}{\myauthor{A.\,K.\,Motovilov}}
\ndef{\Moyer}{\myauthor{R.\,D.\,Moyer}}
\ndef{\Naboko}{\myauthor{S.\,N.\,Naboko}}
\ndef{\Narasimhan}{\myauthor{R.\,Narasimhan}}
\ndef{\Nomizu}{\myauthor{K.\,Nomizu}}
\ndef{\Novikov}{\myauthor{S.\,P.\,Novikov}}
\ndef{\Osterwalder}{\myauthor{K.\,Osterwalder}}
\ndef{\Patodi}{\myauthor{V.\,Patodi}}
\ndef{\Pagter}{\myauthor{B.\,de~Pagter}}  
\ndef{\Pastur}{\myauthor{L.\,A.\,Pastur}}  
\ndef{\Pavlov}{\myauthor{B.\,S.\,Pavlov}}
\ndef{\Pedersen}{\myauthor{G.\,K.\,Pedersen}}
\ndef{\Peller}{\myauthor{V.\,V.\,Peller}}
\ndef{\Perera}{\myauthor{V.\,S.\,Perera}}
\ndef{\Petunin}{\myauthor{Ju.\,I.\,Petunin}}
\ndef{\Phillips}{\myauthor{J.\,Phillips}}  
\ndef{\Piazza}{\myauthor{P.\,Piazza}}   
\ndef{\Pincus}{\myauthor{J.\,D.\,Pincus}}   
\ndef{\Poincare}{Poincar\'e}
\ndef{\Postnikov}{\myauthor{M.\,M.\,Postnikov}} 
\ndef{\Prinzis}{\myauthor{R.\,Prinzis}}
\ndef{\Privalov}{\myauthor{I.\,I.\,Privalov}}
\ndef{\Pushnitski}{\myauthor{A.\,B.\,Pushnitski}} 
\ndef{\Raeburn}{\myauthor{I.\,Raeburn}}
\ndef{\Raikov}{\myauthor{G.\,Raikov}}
\ndef{\Reed}{\myauthor{M.\,Reed}}
\ndef{\Rennie}{\myauthor{A.\,Rennie}}
\ndef{\Rickart}{\myauthor{C.\,E.\,Rickart}}
\ndef{\Riesz}{\myauthor{F.\,Riesz}}
\ndef{\Ringrose}{\myauthor{J.\,Ringrose}}
\ndef{\Rio}{\myauthor{R.\,del Rio}}
\ndef{\Robinson}{\myauthor{D.\,Robinson}}
\ndef{\Rossi}{\myauthor{H.\,Rossi}}
\ndef{\Rudin}{\myauthor{W.\,Rudin}}
\ndef{\Ruelle}{\myauthor{D.\,Ruelle}}
\ndef{\Ruzhansky}{\myauthor{M.\,Ruzhansky}}
\ndef{\Sakai}{\myauthor{Sh.\,Sakai}}
\ndef{\Sargsjan}{\myauthor{I.\,S.\,Sargsjan}}
\ndef{\Sato}{\myauthor{H.\,Sato}}
\ndef{\Schaeffer}{\myauthor{D.\,G.\,Schaeffer}}
\ndef{\Schluchtermann}{\myauthor{G.\,Schluchtermann}}
\ndef{\Schochet}{\myauthor{C.\,Schochet}}
\ndef{\SchroedingerE}{\myauthor{E.\,Schr\"odinger}}
\ndef{\Schroedinger}{\myauthor{Schr\"odinger}}
\ndef{\Schrohe}{\myauthor{E.\,Schrohe}}
\ndef{\Schwartz}{\myauthor{J.\,T.\,Schwartz}}
\ndef{\Sedaev}{\myauthor{A.\,A.\,Sedaev}}
\ndef{\Seiler}{\myauthor{R.\,Seiler}}
\ndef{\Semenov}{\myauthor{E.\,M.\,Semenov}}
\ndef{\Shabat}{\myauthor{B.\,V.\,Shabat}}
\ndef{\Shafarevich}{\myauthor{I.\,R.\,Shafarevich}}
\ndef{\Sharpley}{\myauthor{R.\,Sharpley}}
\ndef{\Shilov}{\myauthor{G.\,E.\,Shilov}}
\ndef{\Shirkov}{\myauthor{D.\,V.\,Shirkov}}
\ndef{\Shubin}{\myauthor{M.\,A.\,Shubin}}
\ndef{\Silverman}{\myauthor{H.\,Silverman}}
\ndef{\Simon}{\myauthor{B.\,Simon}}
\ndef{\Sinai}{\myauthor{Ya.\,G.\,Sinai}}
\ndef{\Singer}{\myauthor{I.\,M.\,Singer}}
\ndef{\Solomyak}{\myauthor{M.\,Z.\,Solomyak}}
\ndef{\Soloviev}{\myauthor{Yu.\,P.\,Soloviev}}
\ndef{\Spivak}{\myauthor{M.\,Spivak}}
\ndef{\Stein}{\myauthor{E.\,M.\,Stein}}
\ndef{\Stenkin}{\myauthor{V.\,V.\,Sten'kin}}
\ndef{\Stratila}{\myauthor{S.\,Stratila}}
\ndef{\Sucheston}{\myauthor{L.\,Sucheston}}
\ndef{\Sukochev}{\myauthor{F.\,A.\,Sukochev}}
\ndef{\Switzer}{\myauthor{R.\,M.\,Switzer}}
\ndef{\SzNagy}{\myauthor{B.\,Sz.-Nagy}}
\ndef{\Takesaki}{\myauthor{M.\,Takesaki}}
\ndef{\Taylor}{\myauthor{M.\,E.\,Taylor}}
\ndef{\Treves}{\myauthor{F.\,Treves}}
\ndef{\Troitsky}{\myauthor{E.\,V.\,Troitsky}}
\ndef{\Tzafriri}{\myauthor{L.\,Tzafriri}}
\ndef{\Varilly}{\myauthor{J.\,C.\,V\'{a}rilly}}
\ndef{\Vergne}{\myauthor{M.\,Vergne}}
\ndef{\Vladimirov}{\myauthor{V.\,S.\,Vladimirov}}
\ndef{\Voiculescu}{\myauthor{D.\,Voiculescu}}
\ndef{\Weiss}{\myauthor{G.\,Weiss}}
\ndef{\Wells}{\myauthor{R.\,O.\,Wells}}
\ndef{\Williams}{\myauthor{J.\,P.\,Williams}}
\ndef{\Winkler}{\myauthor{S.\,Winkler}}
\ndef{\Witten}{\myauthor{E.\,Witten}}
\ndef{\Wodzicki}{\myauthor{M.\,Wodzicki}}
\ndef{\Wojciechowski}{\myauthor{K.\,P.\,Wojciechowski}}
\ndef{\Yafaev}{\myauthor{D.\,R.\,Yafaev}}
\ndef{\Yosida}{\myauthor{K.\,Yosida}}
\ndef{\Zsido}{\myauthor{L.\,Zsido}}
\ndef{\hlambda}{{\mathfrak h_\lambda}}
\ndef{\hlambdao}{{\mathfrak h_\lambda^{(0)}}}
\ndef{\hlambdar}{{\mathfrak h_\lambda^{(r)}}}
\begin{document}
\title{Singular spectral shift is additive}
\author{\Azamov}
\address{School of Computer Science, Engineering and Mathematics
   \\ Flinders University
   \\ Bedford Park, 5042, SA Australia.}
\email{azam0001@csem.flinders.edu.au}

\subjclass[2000]{ 
    Primary 47A55; 
}
\begin{abstract} In this note it is proved that the singular part of the spectral
shift function is additive. That is, if $H_0,H_1$ and $H_2$ are self-adjoint (not necessarily bounded) operators
with trace-class differences, then
$$
  \xis_{H_2,H_0} = \xis_{H_2,H_1} + \xis_{H_1,H_0}.
$$
Here, for any $\phi \in C_c(\mbR)$
$$
  \xis_{H_1,H_0}(\phi) := \int_0^1 \Tr(V \phi(H_r^{(s)}))\,dr,
$$
where $V = H_1-H_0,$ $H_r = H_0+rV$ and $H_r^{(s)}$ is the singular part of $H_r.$
\end{abstract}
\maketitle

\section{Introduction}

Let $H_0$ be a self-adjoint operator and $V$ be a trace class self-adjoint
operator. The Lifshits-Krein spectral shift function (\cite{Li52UMN,Kr53MS}, see also \cite[Chapter 8]{Ya} and \cite{SimTrId2}) is the unique $L_1$-function
$\xi_{H_0+V,H_0}$ such that for any $\phi \in C_c^\infty$
the equality
$$
  \Tr(\phi(H_0+V) - \phi(H_0)) = \int \xi_{H_0+V,H_0}(\lambda)\phi'(\lambda)\,d\lambda.
$$
holds. Krein also showed in \cite{Kr53MS} that for any self-adjoint operators $H_0,H_1$ and $H_2$
with trace-class differences the equality
$$
  \xi_{H_2,H_0} = \xi_{H_2,H_1} + \xi_{H_1,H_0}
$$
holds.

In \cite{BS75SM}, Birman and Solomyak proved the following spectral averaging formula for the spectral shift function:
$$
  \xi_{H_0+V,H_0}(\phi) := \int_0^1 \Tr(V \phi(H_r))\,dr, \ \phi \in C_c(\mbR)
$$
(note that if $\phi$ is a function then $\xi$ in $\xi(\phi)$ denotes a measure, and if $\lambda$ is a number then
$\xi$ in $\xi(\lambda)$ denotes a function --- density of the absolutely continuous measure $\xi$).

In \cite{Az3v4} (see also \cite{Az,Az2}) I introduced the so-called absolutely continuous and singular spectral
shift functions $\xia$ and $\xis$ by formulas
$$
  \xia_{H_0+V,H_0}(\phi) := \int_0^1 \Tr(V \phi(H_r^{(a)}))\,dr, \ \phi \in C_c(\mbR)
$$
and
$$
  \xis_{H_0+V,H_0}(\phi) := \int_0^1 \Tr(V \phi(H_r^{(s)}))\,dr, \ \phi \in C_c(\mbR),
$$
where $H_r = H_0+rV,$ $H_r^{(a)}$ is the absolutely continuous part of $H_r$ and $H_r^{(s)}$ is the singular part of $H_r.$

The distributions $\xis$ and $\xia$ are absolutely continuous finite measures \cite{Az3v4}.

In \cite{Az3v4} it is proved that for all
operators $V_1$ from a linear manifold $\clA_0 \subset \clL_1,$
which is dense in $\clL_1,$ the equality
\begin{equation} \label{F: xia is additive}
  \xia_{H_0+V,H_0}(\phi) = \xia_{H_0+V,H_0+V_1}(\phi) +
  \xia_{H_0+V_1,H_0}(\phi).
\end{equation}
holds for all $\phi \in C_c^\infty.$ This equality implies similar
equality for $\xis.$

In this note I give a proof of the equality (\ref{F: xia is additive}) for all trace-class self-adjoint operators $V$ and $V_1.$
This implies that for any self-adjoint operator $H_0$ and any trace-class self-adjoint operators $V_1$ and $V_2$
the equality
\begin{equation} \label{F: xis is additive}
  \xis_{H_0+V_2,H_0}(\phi) = \xis_{H_0+V_2,H_0+V_1}(\phi) + \xis_{H_0+V_1,H_0}(\phi)
\end{equation}
holds.

The additivity property (\ref{F: xis is additive}) of the singular spectral shift function $\xis$ combined
with the fact that the density $\xis(\lambda)$ of the measure $\xis$ is a.e. integer-valued \cite{Az3v4},
suggests that the singular spectral shift function should be interpreted as generalization of spectral flow
of eigenvalues (see e.g. \cite{APS76,Ge93Top,Ph96CMB,Ph97FIC,CP98CJM,CP2,ACDS,ACS,Azbook}) to the case of spectral flow inside the essential spectrum.

\section{Results}

\begin{thm} \label{T: xia is weakly cont-s w.r.t. V} Let $H_0$ be a self-adjoint operator on $\hilb,$ let $V$ be a trace-class self-adjoint operator on $\hilb.$
If $V_1, V_2, \ldots$ is a sequence of self-adjoint trace-class operators converging to $V$ in the trace-class norm, then
for any $\phi \in C_c$ the equality
$$
  \lim_{n \to \infty} \xi^{(a)}_{H_0+V_n,H_0}(\phi) = \xi^{(a)}_{H_0+V,H_0}(\phi).
$$
holds. Shortly, the absolutely continuous part of the spectral shift function  $\xi^{(a)}_{H_0+V,H_0}$ is weakly-continuous with respect to $V \in \clL_1(\hilb).$
\end{thm}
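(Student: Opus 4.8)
The plan is to pass to the limit under the integral sign in $\xi^{(a)}_{H_0+V_n,H_0}(\phi)=\int_0^1\Tr(V_n\phi(H_{r,n}^{(a)}))\,dr$, where $H_{r,n}:=H_0+rV_n$ and $H_{r,n}^{(a)}$ is its absolutely continuous part. Since $\|V_n\|_1\to\|V\|_1$, one has the uniform bound $\bigl|\Tr(V_n\phi(H_{r,n}^{(a)}))\bigr|\le\|V_n\|_1\|\phi\|_\infty\le C\|\phi\|_\infty$ for all $r\in[0,1]$ and all $n$, so by dominated convergence it suffices to prove, for each fixed $r\in[0,1]$, that $\Tr(V_n\phi(H_{r,n}^{(a)}))\to\Tr(V\phi(H_r^{(a)}))$ as $n\to\infty$.

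First I would rewrite these traces by means of wave operators. Since $rV$ is trace class, the wave operators $W_r:=W^+(H_r,H_0)$ exist and are complete by the Kato--Rosenblum theorem, so that $W_r^*W_r=P_0^{(a)}$, $W_rW_r^*=P_r^{(a)}$ (the absolutely continuous projection of $H_r$), and $\phi(H_r)W_r=W_r\phi(H_0)$ on $\hilb$. Hence
$$\phi(H_r^{(a)})=\phi(H_r)P_r^{(a)}=W_r\phi(H_0)W_r^*,$$
and, by cyclicity of the trace, $\Tr(V\phi(H_r^{(a)}))=\Tr(W_r^*VW_r\,\phi(H_0))$; the same identity holds with $V,H_r,W_r$ replaced by $V_n,H_{r,n},W_{r,n}:=W^+(H_{r,n},H_0)$. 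The advantage of this rewriting is that the factor $\phi(H_0)$ is now fixed, so the problem is reduced to controlling the trace-class operators $W_{r,n}^*V_nW_{r,n}$.

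The key step is the strong convergence $W_{r,n}\to W_r$ and $W_{r,n}^*\to W_r^*$ — the strong stability of the complete wave operators (and their adjoints) under perturbations converging in trace norm, here $H_{r,n}-H_0=rV_n\to rV=H_r-H_0$ in $\clL_1$. Granting this, $W_{r,n}^*V_nW_{r,n}\to W_r^*VW_r$ in $\clL_1$: the term $W_{r,n}^*(V_n-V)W_{r,n}$ has trace norm at most $\|V_n-V\|_1\to0$, while $W_{r,n}^*VW_{r,n}\to W_r^*VW_r$ in $\clL_1$ because $V$ is a trace-norm limit of finite-rank operators, $\|W_{r,n}\|\le1$, and both $W_{r,n}$ and $W_{r,n}^*$ converge strongly. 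Consequently $\Tr(W_{r,n}^*V_nW_{r,n}\phi(H_0))\to\Tr(W_r^*VW_r\phi(H_0))$ since $\phi(H_0)$ is a fixed bounded operator, and dominated convergence over $r\in[0,1]$ finishes the proof.

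I expect the strong stability of the wave operators to be the main obstacle: they are discontinuous in the operator norm, and the trace-class setting does not in general permit elementary Cook-type estimates, so one must invoke the stationary scattering theory (or Pearson's method) to obtain strong convergence of $W_{r,n}^{\pm}$ and of their adjoints. As a cross-check and an alternative route, note that the full Lifshits--Krein function is already trace-norm continuous in $V$: by Krein's additivity $\xi_{H_0+V_n,H_0}-\xi_{H_0+V,H_0}=\xi_{H_0+V_n,\,H_0+V}$ and $\|\xi_{A+K,A}\|_1\le\|K\|_1$, so $\|\xi_{H_0+V_n,H_0}-\xi_{H_0+V,H_0}\|_1\le\|V_n-V\|_1\to0$; hence the statement of the theorem is equivalent to the weak continuity of the singular part $\xi^{(s)}=\xi-\xi^{(a)}$, which could instead be attacked through its eigenvalue-counting interpretation.
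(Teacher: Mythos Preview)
Your proposal is correct and follows essentially the same approach as the paper: reduce by dominated convergence to pointwise convergence in $r$, then invoke the strong continuity of wave operators under trace-class perturbations (this is precisely \cite[Theorem~6.3.6]{Ya} in the paper's proof) together with the fact that a fixed trace-class operator sandwiched between strongly convergent, uniformly bounded operators converges in trace norm. The only cosmetic difference is that you conjugate back to the fixed factor $\phi(H_0)$ via $W^+(H_{r,n},H_0)$, whereas the paper compares $\phi(H_n^{(a)})$ directly to $\phi(H^{(a)})$ via $W_+(H_n,H)$ and the identity $\phi(H_n^{(a)})=W_+(H_n,H)\phi(H^{(a)})W_+^*(H_n,H)$; both routes hinge on the same stability theorem for wave operators, and your version follows from the paper's via the chain rule $W^+(H_{r,n},H_0)=W^+(H_{r,n},H_r)W^+(H_r,H_0)$.
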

\begin{proof}
We have to prove that for any $\phi \in C_c(\mbR)$ the difference
\begin{equation} \label{F: 33}
  \int_0^1 \brs{\Tr\brs{V \phi(H_0+rV)^{(a)}} - \Tr \brs{V_n \phi(H_0+rV_n)^{(a)}}}\,dr
\end{equation}
goes to $0$ as $n\to \infty.$ Since the integrand as a function of $r$ is bounded by
$2\norm{V}_1 \norm{\phi}_\infty$ for all large enough $n,$  it follows from the Lebesgue dominated convergence theorem
that it is enough to prove that for any fixed $r \in [0,1]$
$$
  \lim_{n \to \infty} \Tr \brs{V_n \phi(H_0+rV_n)^{(a)}} = \Tr\brs{V \phi(H_0+rV)^{(a)}}.
$$
Further, since
\begin{equation*}
  \begin{split}
       & \Tr\brs{V \phi(H_0+rV)^{(a)}} - \Tr \brs{ V_n \phi(H_0+r V_n)^{(a)}}
       \\ & \qquad = \Tr\brs{(V-V_n) \phi(H_0+rV_n)^{(a)}} + \Tr \brs{V \brs{\phi(H_0+r V)^{(a)} - \phi(H_0+ rV_n)^{(a)}} }
  \end{split}
\end{equation*}
and since
$$
  \abs{\Tr\brs{(V-V_n) \phi(H_0+rV_n)^{(a)}}} \leq \norm{V-V_n}_1 \cdot \norm{\phi}_\infty \to 0 \ \text{as} \ n \to \infty,
$$
it is enough to prove that
\begin{equation} \label{F: something enough to prove}
  \lim_{n \to \infty} \Tr \brs{V \brs{\phi(H_0+r V)^{(a)} - \phi(H_0+ rV_n)^{(a)}} } = 0.
\end{equation}
It follows from \cite[Lemma 6.1.3]{Ya}, that for this it is enough to show that
\begin{equation} \label{F: 77}
  \text{s-}\lim_{n \to \infty} \phi(H_0+ rV_n)^{(a)} = \phi(H_0+rV)^{(a)},
\end{equation}
where the limit is taken in the strong operator topology.
We can assume that $r = 1.$
Let $H = H_0+V,$ $H_n=H_0+V_n.$
For self-adjoint operators $H_0$ and $H_1,$
let $W_\pm(H_1,H_0)$ be wave operators of the pair $H_0$ and $H_1$ (if they exist) and let $P^{(a)}(H_0)$
be the orthogonal projection onto the absolutely continuous part of $H_0.$
Since
$$
  W_+(H_n,H) \phi(H^{(a)}) W_+^*(H_n,H) =  \phi(H_n^{(a)}),
$$
it follows that
\begin{equation} \label{F: 55}
  \begin{split}
    \phi(H^{(a)}) - \phi(H_n^{(a)}) & = \phi(H^{(a)}) - W_+(H_n,H)\phi(H^{(a)})W_+^*(H_n,H)
    \\ & = \brs{\phi(H^{(a)}) - W_+(H_n,H)\phi(H^{(a)})}
    \\ & \qquad \qquad + \brs{W_+(H_n,H)\phi(H^{(a)}) - W_+(H_n,H)\phi(H^{(a)})W_+^*(H_n,H)}
    \\ & = \brs{P^{(a)}(H) - W_+(H_n,H)} \phi(H^{(a)})
    \\ & \qquad\qquad + W_+(H_n,H)\phi(H^{(a)})\brs{P^{(a)}(H) - W_+^*(H_n,H)}.
  \end{split}
\end{equation}
\cite[Theorem 6.3.6]{Ya} implies that
$$
  \text{s-}\lim_{n \to \infty} W_+(H_n,H) = P^{(a)}(H)
$$
and
$$
  \text{s-}\lim_{n \to \infty} W_+^*(H_n,H) = P^{(a)}(H).
$$
It follows from this and (\ref{F: 55}) that (\ref{F: 77}) holds.

The proof is complete.
\end{proof}

\begin{thm} \label{T: xia is additive} The absolutely continuous part of the spectral shift function is additive.
That is, if $H_0$ is a self-adjoint operator on $\hilb,$ and if $V_1,V_2$ are trace-class self-adjoint operators on $\hilb,$
then for any $\phi\in C_c(\mbR)$ the equality
\begin{equation} \label{F: 2}
  \xia_{H_0+V_2,H_0}(\phi) = \xia_{H_0+V_2,H_0+ V_1}(\phi) + \xia_{H_0+V_1,H_0}(\phi)
\end{equation}
holds.
\end{thm}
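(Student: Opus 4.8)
The plan is to deduce the general case from the partial additivity \eqref{F: xia is additive} of \cite{Az3v4} by an approximation argument, the two ingredients being Theorem~\ref{T: xia is weakly cont-s w.r.t. V} and the elementary antisymmetry of $\xia$.

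First I would record the antisymmetry relation
$$
  \xia_{H_1,H_0}(\phi)=-\,\xia_{H_0,H_1}(\phi),
$$
valid for all self-adjoint $H_0,H_1$ with $H_1-H_0\in\clL_1$ and all $\phi\in C_c(\mbR)$. This is immediate from the averaging formula defining $\xia$: writing $W=H_0-H_1$, one has $\xia_{H_0,H_1}(\phi)=\int_0^1\Tr\bigl(W\,\phi((H_1+rW)^{(a)})\bigr)\,dr$, and the substitution $r\mapsto 1-r$ replaces the path $\{H_1+rW\}_{r\in[0,1]}$ by $\{H_0+r(H_1-H_0)\}_{r\in[0,1]}$ while turning $W$ into $-(H_1-H_0)$, which produces exactly $-\,\xia_{H_1,H_0}(\phi)$.

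Now fix $\phi\in C_c^\infty(\mbR)$. Since $\clA_0$ is dense in $\clL_1$, choose $W_k\in\clA_0$ with $\|W_k-V_1\|_1\to 0$. Applying \eqref{F: xia is additive} with $V_2$ in the role of $V$ and $W_k$ in the role of $V_1$ gives, for every $k$,
$$
  \xia_{H_0+V_2,H_0}(\phi)=\xia_{H_0+V_2,H_0+W_k}(\phi)+\xia_{H_0+W_k,H_0}(\phi),
$$
and I then let $k\to\infty$. The left-hand side is independent of $k$. In the last term the base point $H_0$ is fixed and the perturbations $W_k$ converge to $V_1$ in $\clL_1$, so Theorem~\ref{T: xia is weakly cont-s w.r.t. V} gives $\xia_{H_0+W_k,H_0}(\phi)\to\xia_{H_0+V_1,H_0}(\phi)$. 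The middle term is the delicate point: in $\xia_{H_0+V_2,H_0+W_k}(\phi)$ the operator that varies with $k$ is $H_0+W_k$, which occupies the base-point slot, whereas Theorem~\ref{T: xia is weakly cont-s w.r.t. V} controls only the variation of the perturbation of a fixed base point. I therefore use the antisymmetry to rewrite $\xia_{H_0+V_2,H_0+W_k}(\phi)=-\,\xia_{H_0+W_k,H_0+V_2}(\phi)$; in the right-hand side the base point is the fixed operator $H_0+V_2$ and the perturbations $(H_0+W_k)-(H_0+V_2)=W_k-V_2$ converge to $V_1-V_2$ in $\clL_1$, so Theorem~\ref{T: xia is weakly cont-s w.r.t. V} applies and yields $\xia_{H_0+W_k,H_0+V_2}(\phi)\to\xia_{H_0+V_1,H_0+V_2}(\phi)$, hence $\xia_{H_0+V_2,H_0+W_k}(\phi)\to-\,\xia_{H_0+V_1,H_0+V_2}(\phi)=\xia_{H_0+V_2,H_0+V_1}(\phi)$. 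Passing to the limit in the displayed identity yields \eqref{F: 2} for every $\phi\in C_c^\infty(\mbR)$.

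Finally, to pass from $C_c^\infty$ to $C_c$, I would use that $\xia_{H_0+V_2,H_0}$, $\xia_{H_0+V_2,H_0+V_1}$ and $\xia_{H_0+V_1,H_0}$ are finite measures: every $\phi\in C_c(\mbR)$ is a uniform limit of functions in $C_c^\infty(\mbR)$ (mollification), and $|\mu(\psi)|\le\|\mu\|\,\|\psi\|_\infty$ for a finite measure $\mu$, so \eqref{F: 2} extends to all $\phi\in C_c(\mbR)$. (Should the cited result \eqref{F: xia is additive} in fact also require $V\in\clA_0$, one further round of the same argument removes this restriction: approximate $V_2$ by operators in $\clA_0$, keeping $H_0+V_1$ as the fixed base point for the term $\xia_{H_0+V_2,H_0+V_1}(\phi)$, which is then handled by Theorem~\ref{T: xia is weakly cont-s w.r.t. V} directly, with no recourse to antisymmetry.) As indicated, the one genuine obstacle is the middle term $\xia_{H_0+V_2,H_0+W_k}(\phi)$, and the antisymmetry relation is precisely what converts it into something the fixed-base continuity of Theorem~\ref{T: xia is weakly cont-s w.r.t. V} can handle.
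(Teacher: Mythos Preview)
Your proof is correct and follows essentially the same approach as the paper: approximate $V_1$ by a sequence from the dense set $\clA_0$, invoke the partial additivity of \cite{Az3v4} along the sequence, and pass to the limit using Theorem~\ref{T: xia is weakly cont-s w.r.t. V} for the term with fixed base $H_0$ and, via the antisymmetry $\xia_{H_1,H_0}=-\xia_{H_0,H_1}$, for the term whose base point varies. The paper additionally records that one first picks a frame operator $F$ with $V_2\in\clA(F)$ (so that \cite[Theorem~9.12]{Az3v4} applies), a point your parenthetical remark effectively covers; your explicit treatment of the passage from $C_c^\infty$ to $C_c$ is a harmless extra detail.
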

\begin{proof} Let $H_0$ be a self-adjoint operator on $\hilb,$ and let $V$ and $V_1$ be two trace-class self-adjoint operators on $\hilb.$
We need to show that for any $\phi \in C_c^\infty$
$$
  \xia_{H_0+V,H_0}(\phi) = \xia_{H_0+V,H_0+ V_1}(\phi) + \xia_{H_0+V_1,H_0}(\phi)
$$
By \cite[Lemma 5.2]{Az3v4}, for a given trace-class operator $V$ one can choose a frame operator $F$
(see \cite{Az3v4} for the definition of the frame operator) such that $V \in \clA(F) \subset \clL_1(\hilb),$
where $\clA(F)$ is a dense linear subset of $\clL_1(\hilb)$ (see \cite[\S 5]{Az3v4} for the definition of the class $\clA(F)$).

By \cite[Theorem 9.12]{Az3v4}, there exists a dense linear subset $\clA_0$ (which depends on $H_0$) of $\clA(F),$ such that for any $\tilde V \in \clA_0$
and any function $\phi \in C_c(\mbR)$ the equality
\begin{equation} \label{F: 1}
  \xia_{H_0+V,H_0}(\phi) = \xia_{H_0+V,H_0+ \tilde V}(\phi) + \xia_{H_0+\tilde V,H_0}(\phi)
\end{equation}
holds. Since $\clA_0$ is dense in $\clL_1(\hilb)$ too, it follows that there exists a sequence $V_2, V_3, \ldots \in \clA_0,$
such that $V_n \to V_1$ in the trace class norm as $n \to \infty,$ and for any $n =2,3,\ldots$
the equality
\begin{equation} \label{F: 5}
  \xia_{H_0+V,H_0}(\phi) = \xia_{H_0+V,H_0+ V_n}(\phi) + \xia_{H_0+V_n,H_0}(\phi)
\end{equation}
holds.
By Theorem \ref{T: xia is weakly cont-s w.r.t. V},
\begin{equation} \label{F: 3}
  \lim_{n \to \infty} \xia_{H_0+V_n,H_0}(\phi) = \xia_{H_0+V_1,H_0}(\phi).
\end{equation}
It directly follows from the definition of $\xia$ that
\begin{equation} \label{F: xia=-xia}
  \xia_{H_1,H_0} = - \xia_{H_0,H_1}
\end{equation}
for any two self-adjoint operators $H_0,H_1$ with trace-class difference.
It follows from (\ref{F: xia=-xia}) and Theorem \ref{T: xia is weakly cont-s w.r.t. V} that
$$
  \lim_{n \to \infty} \xia_{H_0+V,H_0+V_n}(\phi) = \xia_{H_0+V,H_0+V_1}(\phi).
$$
Combining this equality with (\ref{F: 5}) and (\ref{F: 3}) completes the proof.
\end{proof}

\begin{cor} The singular part of the spectral shift function is additive.
That is, if $H_0$ is a self-adjoint operator on $\hilb,$ and if $V_1,V_2$ are trace-class self-adjoint operators on $\hilb,$
then for any $\phi\in C_c(\mbR)$ the equality
\begin{equation*}
  \xis_{H_0+V_2,H_0}(\phi) = \xis_{H_0+V_2,H_0+ V_1}(\phi) + \xis_{H_0+V_1,H_0}(\phi)
\end{equation*}
holds.
\end{cor}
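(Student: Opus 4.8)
The plan is to derive the additivity of $\xis$ by subtracting the additivity of the absolutely continuous part (Theorem~\ref{T: xia is additive}) from Krein's additivity of the full spectral shift function, which was recalled in the introduction. The bridge between the two is the decomposition $\xi = \xia + \xis$ of the total spectral shift measure. This follows from the Birman--Solomyak averaging formula: for each $r \in [0,1]$ the absolutely continuous and singular subspaces of $H_r = H_0 + rV$ are mutually orthogonal, reduce $H_r$, and their projections $P^{(a)}(H_r), P^{(s)}(H_r)$ sum to the identity and commute with $\phi(H_r)$; hence $\phi(H_r) = \phi(H_r)P^{(a)}(H_r) + \phi(H_r)P^{(s)}(H_r)$, and since $V \in \clL_1(\hilb)$, multiplying by $V$, taking the trace, and integrating in $r$ gives
$$
  \xi_{H_0+V,H_0}(\phi) = \xia_{H_0+V,H_0}(\phi) + \xis_{H_0+V,H_0}(\phi)
$$
for every $\phi \in C_c(\mbR)$.

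Concretely I would proceed as follows. First, apply this decomposition to each of the three pairs $(H_0+V_2,H_0)$, $(H_0+V_2,H_0+V_1)$ and $(H_0+V_1,H_0)$. Second, use Krein's additivity $\xi_{H_2,H_0} = \xi_{H_2,H_1} + \xi_{H_1,H_0}$, which is an identity of $L_1$-functions and hence of finite measures, so in particular
$$
  \xi_{H_0+V_2,H_0}(\phi) = \xi_{H_0+V_2,H_0+V_1}(\phi) + \xi_{H_0+V_1,H_0}(\phi)
$$
for all $\phi \in C_c(\mbR)$. Third, apply Theorem~\ref{T: xia is additive}, which gives the same identity with every $\xi$ replaced by $\xia$. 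Subtracting the $\xia$-identity from the $\xi$-identity and substituting $\xi = \xia + \xis$ for each of the three pairs leaves exactly
$$
  \xis_{H_0+V_2,H_0}(\phi) = \xis_{H_0+V_2,H_0+V_1}(\phi) + \xis_{H_0+V_1,H_0}(\phi),
$$
which is the assertion.

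The substantive content is entirely in Theorem~\ref{T: xia is additive} (and, upstream of it, in \cite{Az3v4}); what remains is a formal manipulation of three displayed identities. The only step deserving a word of care is the splitting $\xi = \xia + \xis$ --- that is, that the trace in the averaging formula distributes over the orthogonal decomposition of $\hilb$ into the absolutely continuous and singular subspaces of $H_r$. This is harmless: the two reducing projections commute with $\phi(H_r)$ and add up to the identity, and $V\phi(H_r)P^{(a)}(H_r)$ and $V\phi(H_r)P^{(s)}(H_r)$ are both trace class because $V$ is. Accordingly I do not expect any real obstacle; the only thing to keep an eye on is notational consistency in how $\phi(H_r^{(a)})$ and $\phi(H_r^{(s)})$ are read as operators on all of $\hilb$.
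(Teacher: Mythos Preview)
Your argument is correct and is precisely the paper's own proof: the corollary is deduced in one line from Theorem~\ref{T: xia is additive} together with Krein's additivity of the full spectral shift function, via the decomposition $\xi=\xia+\xis$ implicit in the Birman--Solomyak formula. You have simply spelled out the splitting step in more detail than the paper does.
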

\begin{proof} This follows from Theorem \ref{T: xia is additive} and additivity of the Lifshits-Krein spectral shift function.
\end{proof}

\rndef{\emph}[1]{{\it #1}}

\mathsurround 0pt
\ndef{\AndSoOn}{$\dots$}


\end{document}